\newcommand{\blind}{1}
\renewcommand{\theequation}{\arabic{equation}}
\newcommand{\beq}{\begin{equation}}
	\newcommand{\eeq}{\end{equation}}
\newtheorem{lemma}{Lemma}
\newtheorem{prop}{Proposition}
\def\ba{\begin{array}}
	\def\ea{\end{array}}
\newcommand{\Prob}{\operatorname{P}}
\newcommand{\MI}{M\hspace{-1.0mm}I}
\newcommand\numberthis{\addtocounter{equation}{1}\tag{\theequation}}
\begin{document}

\def\spacingset#1{\renewcommand{\baselinestretch}%
{#1}\small\normalsize} \spacingset{1}


\if1\blind
{
  \title{\bf A Normal Test for Independence via Generalized Mutual Information}
  \author{Jialin Zhang\thanks{Department of Mathematics and Statistics, Mississippi State University}\hspace{.2cm}\\
    and \\
    Zhiyi Zhang\thanks{Department of Mathematics and Statistics, University of North Carolina at Charlotte}}
  \maketitle
} \fi

\if0\blind
{
  \bigskip
  \bigskip
  \bigskip
  \begin{center}
    {\LARGE\bf A Normal Test for Independence via Generalized Mutual Information}
\end{center}
  \medskip
} \fi

\bigskip
\begin{abstract}
Testing hypothesis of independence between two random elements on a joint alphabet is a fundamental exercise in statistics. Pearson's chi-squared test is an effective test for such a situation when the contingency table is relatively small. General statistical tools are lacking when the contingency data tables are  large or sparse. A test based on generalized mutual information is derived and proposed in this article. The new test has two desired theoretical properties. First, the test statistic is asymptotically normal under the hypothesis of independence; consequently it does not require the knowledge of the row and column sizes of the contingency table. Second, the test is consistent and therefore it would detect any form of dependence structure in the general alternative space given a sufficiently large sample. In addition, simulation studies show that the proposed test converges faster than Pearson's chi-squared test when the contingency table is large or sparse.
\end{abstract}

\noindent%
{\it Keywords:}  Test for independence, countable joint alphabets, mutual information, sparse contingency tables.
\vfill

\newpage
\spacingset{1.9} 

\section{Introduction and Summary} 
Let $(X,Y)$ be a pair of random elements on a joint alphabet, 
$\mathscr{X}\times \mathscr{Y}=\{(l_{i},m_{j});1\leq i\leq I, 1\leq j\leq J\}$, with a joint probability distribution,  $\mathbf{p}_{\scalebox{0.7}{X,Y}}=\{p_{i,j};   1\leq i\leq I, 1\leq j\leq J    \}$,   
and the two marginal distributions, $\mathbf{p}_{\scalebox{0.7}{X}}=\{p_{i,\cdot}=\sum_{j\geq 1}p_{i,j};i\geq 1\}$ and $\mathbf{p}_{\scalebox{0.7}{Y}}=\{p_{\cdot,j}=\sum_{i\geq 1}p_{i,j};j\geq 1\}$, for $X$ and $Y$ respectively. 
Consider one the most fundamental problems of statistics: testing the hypothesis of independence between $X$ and $Y$, denoted $H_{0}: X\perp Y$ versus $H_{a}: X\not\perp Y$. Let an identically and independently distributed (iid) sample of size $n$ be represented by the empirical distribution, that is, $\hat{\mathbf{p}}_{\scalebox{0.7}{X,Y}}=\{\hat{p}_{i,j}=f_{i,j}/n\}$ where $f_{i,j}$ is the observed frequency of letter $(l_{i},m_{j})$. Let 
$\hat{\mathbf{p}}_{\scalebox{0.7}{X}}=  \{\hat{p}_{i,\cdot}=\sum_{j\geq 1}f_{i,j}/n;i\geq 1\}$ and 
$\hat{\mathbf{p}}_{\scalebox{0.7}{Y}}= \{\hat{p}_{\cdot,j}=\sum_{i\geq 1}f_{i,j}/n;j\geq 1\}$ be the two observed marginal relative frequencies of $X$ and $Y$. 

In statistical practice, the standard procedure for such a setting is the well-studied Pearson's chi-squared test, which is based on the fact that, as $n\rightarrow \infty$,
\beq \label{pearson}
T=n\sum_{i,j}\hat{p}_{i,\cdot}\hat{p}_{\cdot,j}\left(\frac{\hat{p}_{i,j}-\hat{p}_{i,\cdot}\hat{p}_{\cdot,j}   }{\hat{p}_{i,\cdot}\hat{p}_{\cdot,j}}\right)^2 \stackrel{p}{\longrightarrow} \chi^2_{(I-1)(J-1)}
\eeq   where $ \chi^2_{(I-1)(J-1)}$ is a chi-squared random variable with degrees of freedom $\nu=(I-1)(J-1)$. The Pearson's chi-squared test is an effective tool for relatively small 2-way contingency tables. However it is not without discomforting issues in practice, particularly when it is applied to a large or sparse contingency table. 

One of the issues in practice is when $I$ and $J$ are unknown. In such a case, it is difficult to fix the reference distribution in (\ref{pearson}). A popular adjustment in practice is to replace $I$ and $J$ with observed numbers of rows and columns, $\hat{I}$ and $\hat{J}$. However, such an adjustment lacks theoretical support and the reference distribution used may be quite far away from the asymptotic chi-squared distribution. Another commonly encountered issue in a large contingency table is the occurrence of low-frequency cells. 
Given the fact that the essence of the asymptotic behavior of Pearson's chi-squared statistic is  the asymptotic normality of $\sqrt{n} \left[  (\hat{p}_{i,j}-\hat{p}_{i,\cdot} \hat{p}_{\cdot,j}) / (\hat{p}_{i,\cdot}\hat{p}_{\cdot,j}) \right]  $
in each cell, many low or zero frequency cells in a large contingency table could negatively impact the performance of the test, mostly in the form of a much inflated Type I error probability.  This is a long-standing issue considered by many in the existing literature. 

A popular adjustment to offset the low or zero frequency cells, when applying a Pearson-type chi-squared test, is to combine cells to increase the cell frequency. A well-known rule of thumb, often thought to be suggested by R.A. Fisher, is to combine cells into new cells such that the combined frequencies are at least five. However, in applying Pearson's chi-squared test for independence, it is not clear how this adjustment may be done. To assure independence under $H_{0}$, the adjustment must be made by combining low-frequency rows and low-frequency columns, respectively, due to the invariance of independence under any row permutation and/or column permutation. However, by doing so, it is not guaranteed that all new cells would see enough frequencies. Only by combining the rows and columns further, can the new cells then see meaningfully higher frequencies. When this is the case, the re-aggregation becomes somewhat arbitrary. Even if this could be done, the following two points of concern remain.  
\begin{enumerate}
	\item Aggressive re-aggregation of cells could greatly reduce the number of (observed) degrees of freedom and consequently could shift the reference distribution to one that is far away from $\chi^2_{(I-1)(J-1)}$. 
	\item Aggressive re-aggregation of cells could cause local dependence between $X$ and $Y$, manifested in fine structures of the joint distribution, to be inadvertently buried, and hence could deprive Pearson's chi-squared test a chance to detect such a dependence. 
\end{enumerate} 

Consider a simple but amplified illustrative example of concept as follows. Let 
\begin{align*} 
	\mathbf{p}_{\scalebox{0.7}{X}}&=\{p_{i,\cdot}; i=1,\cdots,11\}=\{0.9, 0.01,\cdots,0.01\}, \\
	\mathbf{p}_{\scalebox{0.7}{Y}}&=\{p_{\cdot,j}; j=1,\cdots,11\}=\{0.9, 0.01,\cdots,0.01\}. 
\end{align*} Under independence, the joint distribution of $(X,Y)$ is given in $\mathbf{p}_{0}$ below.
\[
\mathbf{p}_{0}
=\left(\begin{array}{c|ccc}
	0.8100 & 0.0090 & \cdots & 0.0090 \\ \hline
	0.0090 & 0.0001 & \cdots & 0.0001 \\
	0.0090 & 0.0001 & \cdots & 0.0001 \\
	\vdots & \vdots & \vdots\hspace{2pt}\vdots\hspace{2pt}\vdots & \vdots \\
	0.0090 & 0.0001 & \cdots & 0.0001
\end{array}\right)
=\left(\begin{array}{cc}
	P_{1,1} & P_{1,2} \\
	P_{2,1} & P_{2,2}
\end{array}\right)
\] 
Summing up all probabilities in $P_{2,2}$, $\sum_{i=2}^{11}\sum_{j=2}^{11}p_{i,j}=0.1$. Let the total mass of $0.1$ be 
redistributed uniformly only on the diagonal of $P_{2,2}$, augmenting $P_{2,2}$ into 
\[
P_{2,2}^*=\left(\begin{array}{cccc}
	0.001 &  &  &   \\
	& 0.001 &  &   \\
	&  & \ddots &   \\
	&  &  & 0.001  
\end{array}\right)
\] with all off-diagonal elements being zeros. Let
\[
\mathbf{p}_{a}=\left(\begin{array}{cc}
	P_{1,1} & P_{1,2} \\
	P_{2,1} & P_{2,2}^*
\end{array}\right)
\] and let it be assumed that $(X,Y)$ follows the joint distribution $\mathbf{p}_{a}$. Clearly, $X$ and $Y$ are not independent. 
Suppose there is a sample with a sufficiently large $n$, such that, all cells with a positive probability see $f_{i,j}\geq 5$. That however does not change the fact that the observed frequencies for cells corresponding to the zero-probability locations in $P_{2,2}^*$ are zeros. In applying the usual re-aggregation of the contingency table, by means of combining rows and columns, one would not be able to end up with all cell frequencies greater than or equal to five, unless all rows of $i=2,\cdots, 11$ are lumped together and all columns of $j=2,\cdots,11$ are lumped together. However by doing this, the underlying joint distribution becomes 
\[     
\mathbf{p}_{a}^*=\left(\begin{array}{cc}
	0.81 & 0.09 \\
	0.09 & 0.01
\end{array}\right)
\] under which $X$ and $Y$ are independent. In this example, it is evident that the aggregated data would imply $\hat{I}=\hat{J}=2$, far away from the $I=J=11$. It is also evident that the data aggregation would completely erase the fine dependence structure in $P_{2,2}^*$ and would leave no chance for the dependence to be detected.  

As the need to extend Pearson's chi-squared test to accommodate data in a large or a sparse contingency table increases, many studies have been reported and less stringent rules of thumb have been proposed. The main guideline for the chi-squared test focuses on the (estimated) expected cell counts in the contingency table, following from \cite{cochran1952chi2}, \cite{cochran1954some}, \cite{agresti2003categorical}, and \cite{yates1999practice}. The widely accepted general rule of thumb (referred to below as the Rule) is:
(a) at least 80$\%$ the expected counts are five or greater, and (b) all individual expected counts are one or greater.  This however is still very stringent. In the example given above, Part (b) alone requires on average $n\times 0.0001\geq 1$, or $n\approx 10000$, or much greater if $f_{i,j}\geq 1$ is required. In fact, in the example above, $f_{i,j}\geq 1$ cannot be satisfied for each and every $(i,j)$. 

To alleviate the above-mentioned difficulties, a new test of independence in a contingency table is proposed in this article. The proposed test has at least two desirable properties.   First, the asymptotic distribution of the test statistic is normal under the independence assumption, and consequently neither the test statistic nor its asymptotic distribution requires the knowledge of $I$ and $J$. Second, the test is consistent and therefore it would detect any form of dependence structure in the general alternative space given a sufficiently large sample. In addition, empirical evidence shows that the proposed test converges faster than Pearson's chi-squared test when the contingency table is large or sparse.

There are five sections in this article. The main results leading to the proposed test are discussed in Section 2. In Section 3, several simulation studies are presented.  A few concluding remarks are given in Section 4. The article ends with Appendix where a few proofs are found.

\section{Toward a Normal Test for Independence}
Consider Shannon's entropy, introduced in \cite{shannon1948mathematical}, for an random element $Z$ assuming a label in a countable  alphabet
$\mathscr{Z}=\{\ell_{k};k\geq 1\}$ with probability distribution $\mathbf{p}=\{p_{k};k\geq 1\}$, 
$H(Z)=-\sum_{k\geq 1}p_{k}\ln p_{k}$, 
and Shannon's mutual information of $X$ and $Y$, $\MI(X,Y)=H(X)+H(Y)-H(X,Y)$. One of the most important utilities of Shannon's mutual information is based on the fact that $\MI(X,Y)=0$ if and only if $X$ and $Y$ are independent. 
The plug-in estimator of $\MI(X,Y)$, $\widehat{\MI}=\hat{H}(X)+\hat{H}(Y)-\hat{H}(X,Y)$,
where $\hat{H}(X)=-\sum_{i\geq 1,}\hat{p}_{i,\cdot}\ln \hat{p}_{i,\cdot}$, 
$\hat{H}(Y)=-\sum_{j\geq 1}\hat{p}_{\cdot,j}\ln \hat{p}_{\cdot,j}$, and $\hat{H}(X,Y)=-\sum_{i\geq 1,j\geq 1}\hat{p}_{i,j}\ln \hat{p}_{i,j}$, is well-studied, and it is known, under mild conditions, that $\sqrt{n}(\widehat{\MI}-\MI)\stackrel{p}{\rightarrow} N(0,\sigma^2_{m})$ where $\sigma^2_{m}$ may be estimated by a consistent estimator.  Many details of the said fact may be found in \cite{zhang2016statistical}. However the mild conditions include $\MI>0$. When $\MI=0$, $\sqrt{n}(\widehat{\MI}-\MI)= \sqrt{n}\widehat{\MI} $ degenerates, but on the other hand, $2n  \widehat{\MI} \stackrel{p}{\rightarrow}\chi^2_{(I-1)(J-1)}$, and the derivation of this fact may be found in \cite{wilks1938large}.

Toward proposing the normal test, let the notion of escort distributions be introduced. In the context of thermodynamics, \cite{beck1995thermodynamics} defines an escort distribution as an induced distribution based on an original distribution, $\mathbf{p}=\{p_{k};k\geq 1\}$, by means of a positive function $g(p)>0$ on $(0,1)$. Let $p_{k}^*=g(p_{k})/\sum_{i\geq 1}g(p_{i})$ for each $k\geq 1$. $\mathbf{p}^*=\{p^*_{k};k\geq 1\}$ is referred to as an escort distribution. The notion of escort distributions is increasingly adopted in recent years as a means of describing random behaviors of different components in a complex system, each of which scans an underlying distribution $\mathbf{p}=\{p_{k};k\geq 1\}$ via a possibly different function $g(p)$. For a specific function form, $g(p)=p^{\lambda}$ where $\lambda>0$ is a parameter, the resulting escort distribution, 
$\mathbf{p}^*=\{p_{k}^*=p_{k}^\lambda/\sum_{i\geq 1}p^{\lambda}_{i};k\geq 1\}$,
is known as a power escort distribution.

Applying the power escort transformation to the joint distribution $\mathbf{p}_{\scalebox{0.7}{X,Y}}$, the resulting distribution is
\beq \label{escortPower}
\mathbf{p}^*_{\scalebox{0.7}{X,Y}}=\left\{p_{i,j}^*=\frac{p_{i,j}^\lambda}{\sum_{s\geq 1,t\geq 1}p^{\lambda}_{s,t}};i\geq 1, j\geq 1 \right\}.
\eeq 
Let $X^*$ and $Y^*$ be a pair of random elements on the same joint alphabet $\mathscr{X}\times\mathscr{Y}$ according to the joint distribution
$\mathbf{p}^*_{\scalebox{0.7}{X,Y}}$ of (\ref{escortPower}). The following lemma is due to \cite{zhang2020generalized}.
\begin{lemma}\label{lemma1}
	Given $\lambda>0$, 
	\begin{enumerate}
		\item $\mathbf{p}_{\scalebox{0.7}{X,Y}}$ and $\mathbf{p}^*_{\scalebox{0.7}{X,Y}}$ uniquely determine each other, that is,  $\mathbf{p}_{\scalebox{0.7}{X,Y}} \Leftrightarrow  \mathbf{p}^*_{\scalebox{0.7}{X,Y}}$; and 
		\item $X$ and $Y$ are independent if and only if $X^*$ and $Y^*$ are independent, that is, 
		$X\perp Y \Leftrightarrow X^* \perp Y^*$.
	\end{enumerate}
\end{lemma}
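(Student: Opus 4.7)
The plan is to exploit the observation that the power escort map with exponent $\lambda>0$ has an inverse which is itself a power escort map with exponent $1/\lambda$. Once that is in hand, part 1 is almost automatic and part 2 reduces to a direct computation in one direction, with the reverse direction following by applying the same computation to the inverse.

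For part 1, I would start from $p_{i,j}^* = p_{i,j}^\lambda / C$, where $C = \sum_{s,t} p_{s,t}^\lambda$ is a positive constant. Solving for $p_{i,j}$ gives $p_{i,j} = C^{1/\lambda} (p_{i,j}^*)^{1/\lambda}$, and imposing $\sum_{i,j} p_{i,j} = 1$ pins down $C^{1/\lambda} = 1/\sum_{s,t} (p_{s,t}^*)^{1/\lambda}$. This produces an explicit formula $p_{i,j} = (p_{i,j}^*)^{1/\lambda}/\sum_{s,t} (p_{s,t}^*)^{1/\lambda}$, which is a power escort transformation of $\mathbf{p}^*_{X,Y}$ with exponent $1/\lambda$. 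In particular the map is a bijection, yielding $\mathbf{p}_{X,Y} \Leftrightarrow \mathbf{p}^*_{X,Y}$.

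For part 2, I would prove the forward direction by direct substitution. Assume $p_{i,j}=p_{i,\cdot}p_{\cdot,j}$ for all $i,j$. Then $p_{i,j}^\lambda = p_{i,\cdot}^\lambda p_{\cdot,j}^\lambda$, so the normalizing constant factors as
\[
C = \sum_{s,t} p_{s,\cdot}^\lambda p_{\cdot,t}^\lambda = \Bigl(\sum_s p_{s,\cdot}^\lambda\Bigr)\Bigl(\sum_t p_{\cdot,t}^\lambda\Bigr) =: A\cdot B.
\]
Computing the marginals of $\mathbf{p}^*_{X,Y}$ then gives $p_{i,\cdot}^* = p_{i,\cdot}^\lambda B / C = p_{i,\cdot}^\lambda / A$ and $p_{\cdot,j}^* = p_{\cdot,j}^\lambda / B$, from which $p_{i,\cdot}^* p_{\cdot,j}^* = p_{i,\cdot}^\lambda p_{\cdot,j}^\lambda / (AB) = p_{i,j}^\lambda/C = p_{i,j}^*$. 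Hence $X^* \perp Y^*$. The reverse implication follows by applying exactly the same argument to $\mathbf{p}^*_{X,Y}$ with exponent $1/\lambda$, using the inverse formula from part 1.

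I do not anticipate a real obstacle; both statements reduce to bookkeeping of the normalizing constant $C$ and the multiplicative structure that independence imposes on $p_{i,j}^\lambda$. The one place worth being careful is verifying that the computed inverse in part 1 genuinely lands back in the simplex (which the derivation of $C^{1/\lambda}$ ensures), so that part 2's reverse implication can invoke part 1 cleanly.
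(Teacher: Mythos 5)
Your proof is correct. Note that the paper itself does not prove Lemma \ref{lemma1} at all --- it simply attributes the result to the cited reference \cite{zhang2020generalized} --- so there is no in-paper argument to compare against; your write-up supplies a complete, self-contained proof. The organizing idea, that the power escort map with exponent $\lambda$ is inverted by the power escort map with exponent $1/\lambda$, does all the work: it yields Part 1 immediately, and it lets the short computation for the forward direction of Part 2 (independence forces the normalizing constant $C$ to factor as $A\cdot B$ and the escort marginals to be $p_{i,\cdot}^{\lambda}/A$ and $p_{\cdot,j}^{\lambda}/B$, whence $p^*_{i,j}=p^*_{i,\cdot}p^*_{\cdot,j}$) be recycled verbatim for the converse. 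The only caveat worth recording is that, if the joint alphabet is countably infinite and $0<\lambda<1$, the normalizing sums $\sum_{s,t}p_{s,t}^{\lambda}$ and $\sum_{s,t}(p^*_{s,t})^{1/\lambda}$ must be assumed finite for the escort distributions and the inverse map to be well defined; this is implicit in the lemma's statement but should be stated explicitly in a self-contained proof. You should also note that cells with $p_{i,j}=0$ are carried to $p^*_{i,j}=0$ and back, so the bijection and the marginal computations are unaffected by empty cells.
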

By Part 2 of Lemma \ref{lemma1}, the null hypothesis, $H_{0}: X\perp Y$ may then be stated equivalently as $H_{0}:X^*\perp Y^*$, that is, $\MI(X^*,Y^*)=0$, or letting $c_{\lambda, \mathbf{p}}=\sum_{s,t} p_{s,t}^\lambda$, 
\beq \label{IndepDef1}
\sum_{i,j}   \left(\frac{p^{\lambda}_{i,j}}{ c_{\lambda, \mathbf{p}}}\right)     
\ln\left( \frac{     p^{\lambda}_{i,j}} {c_{\lambda, \mathbf{p}}} \right)   
- 
\sum_{i} \left[  \left( \frac{  \sum_{t}   p^{\lambda}_{i,t}} {c_{\lambda, \mathbf{p}}} \right) 
\ln \left( \frac{  \sum_{t}   p^{\lambda}_{i,t}} {c_{\lambda, \mathbf{p}}} \right) \right]
- 
\sum_{j} \left[   \left( \frac{\sum_{s}   p^{\lambda}_{s,j}}{ c_{\lambda, \mathbf{p}}  } \right)
\ln \left( \frac{  \sum_{s}   p^{\lambda}_{s,j}} { c_{\lambda, \mathbf{p}} } \right)  \right]=0.
\eeq

On the other hand, let it be observed that under $H_{0}$, 
\begin{align*}
	c_{\lambda, \mathbf{p}}&=\sum_{s,t} p_{s,t}^\lambda = \left( \sum_{s} p_{s,\cdot}^\lambda\right)\left( \sum_{t} p_{\cdot,t}^\lambda \right),  \label{insert01} \numberthis \\
	\sum_{i} \left[  \left( \frac{  \sum_{t}   p^{\lambda}_{i,t}} {c_{\lambda, \mathbf{p}}} \right) 
	\ln \left( \frac{  \sum_{t}   p^{\lambda}_{i,t}} {c_{\lambda, \mathbf{p}}} \right) \right]
	& =\sum_{i} \left[  \left( \frac{ p^{\lambda}_{i,\cdot}} { \sum_{s} p_{s,\cdot}^\lambda} \right) 
	\ln  \left( \frac{ p^{\lambda}_{i,\cdot}} { \sum_{s} p_{s,\cdot}^\lambda} \right)    \right],  \label{insert02} \numberthis  \\
	\sum_{j} \left[   \left( \frac{\sum_{s}   p^{\lambda}_{s,j}}{ c_{\lambda, \mathbf{p}}  } \right)
	\ln \left( \frac{  \sum_{s}   p^{\lambda}_{s,j}} { c_{\lambda, \mathbf{p}} } \right)  \right]
	& =\sum_{j} \left[  \left( \frac{ p^{\lambda}_{\cdot,j}} { \sum_{t} p_{\cdot,t}^\lambda} \right) 
	\ln  \left( \frac{ p^{\lambda}_{\cdot,j}} { \sum_{t} p_{\cdot,t}^\lambda} \right)    \right].   \label{insert03} \numberthis \\
\end{align*}
Let it also be noted that 
\begin{enumerate}
	\item (\ref{IndepDef1}) is a necessary and sufficient condition, that is, the equality of (\ref{IndepDef1})  holds if and only if $X\perp Y$,
	\item the equalities in  (\ref{insert01}), (\ref{insert02}) and (\ref{insert03}) do not necessarily hold in general but under the assumption of $H_{0}$. 
\end{enumerate} Adding and subtracting the left-hand sides of (\ref{insert02}) and (\ref{insert03}) to and from (\ref{IndepDef1}), another restatement of $H_{0}$ is obtained below.
\begin{align*} \label{IndepDef2}
	& \left\{\sum_{i,j}   \left(\frac{p^{\lambda}_{i,j}}{ c_{\lambda, \mathbf{p}}}\right)     
	\ln\left( \frac{     p^{\lambda}_{i,j}} {c_{\lambda, \mathbf{p}}} \right)    
	-    
	\sum_{i} \left[  \left( \frac{ p^{\lambda}_{i,\cdot}} { \sum_{s} p_{s,\cdot}^\lambda} \right) 
	\ln  \left( \frac{ p^{\lambda}_{i,\cdot}} { \sum_{s} p_{s,\cdot}^\lambda} \right)    \right]
	-
	\sum_{j} \left[  \left( \frac{ p^{\lambda}_{\cdot,j}} { \sum_{t} p_{\cdot,t}^\lambda} \right) 
	\ln  \left( \frac{ p^{\lambda}_{\cdot,j}} { \sum_{t} p_{\cdot,t}^\lambda} \right)    \right]\right\}
	\\
	&
	\\
	& + \left\{    
	\sum_{i} \left[  \left( \frac{ p^{\lambda}_{i,\cdot}} { \sum_{s} p_{s,\cdot}^\lambda} \right) 
	\ln  \left( \frac{ p^{\lambda}_{i,\cdot}} { \sum_{s} p_{s,\cdot}^\lambda} \right)  \right]
	+
	\sum_{j} \left[  \left( \frac{ p^{\lambda}_{\cdot,j}} { \sum_{t} p_{\cdot,t}^\lambda} \right) 
	\ln  \left( \frac{ p^{\lambda}_{\cdot,j}} { \sum_{t} p_{\cdot,t}^\lambda} \right)    \right] \right. \\
	& \hspace{2em}
	\left.
	- 
	\sum_{i} \left[  \left( \frac{  \sum_{t}   p^{\lambda}_{i,t}} {c_{\lambda, \mathbf{p}}} \right) 
	\ln \left( \frac{  \sum_{t}   p^{\lambda}_{i,t}} {c_{\lambda, \mathbf{p}}} \right) \right]     
	- 
	\sum_{j} \left[   \left( \frac{\sum_{s}   p^{\lambda}_{s,j}}{ c_{\lambda, \mathbf{p}}  } \right)
	\ln \left( \frac{  \sum_{s}   p^{\lambda}_{s,j}} { c_{\lambda, \mathbf{p}} } \right)  \right] 
	\right\}  \\
	& =0. \numberthis
\end{align*} Writing the terms within the curly brackets in (\ref{IndepDef2}) as $T_A$ and $T_{B}$, (\ref{IndepDef2}) becomes
\beq \label{IndepDef3}
T_{A} + T_{B}=0. 
\eeq A natural test for independence would be to statistically check the value of the left-hand-side in (\ref{IndepDef1}), or that in  (\ref{IndepDef2}), or that in (\ref{IndepDef3}), and assess the statistical evidence against that value being zero. Consider the plug-in estimator of the the left-hand-side of (\ref{IndepDef3}), by 
replacing $p_{i,j}$ with $\hat{p}_{i,j}=f_{i,j}/n$ for every pair $(i,j)$, $p_{i,\cdot}$ with $\hat{p}_{i,\cdot}=\sum_{j}f_{i,j}/n$ for every $i$, and $p_{\cdot,j}$ with $\hat{p}_{\cdot,j}=\sum_{i}f_{i,j}/n$ for every $j$, resulting in plug-in estimators of $T_{A}$ and $T_{B}$, denoted by $\hat{T}_{A}$ and $\hat{T}_{B}$.

By \cite{wilks1938large}, $2n(\hat{T}_{A}+\hat{T}_{B})\stackrel{p}{\rightarrow} \chi^2_{(I-1)(J-1)}$ under $H_{0}$, which implies that  
\beq \label{core01}\sqrt{n}( \hat{T}_{A}+\hat{T}_{B}  ) = \frac{1}{2\sqrt{n}}[2n  ( \hat{T}_{A}+\hat{T}_{B}  ) ]  \stackrel{p}{\longrightarrow}0.
\eeq
The following proposition is the keystone of the test to be proposed.

\begin{prop}\label{prop1} Suppose neither of the two underlying marginal distributions, $\mathbf{p}_{\scalebox{0.7}{X}}$ and 
	$\mathbf{p}_{\scalebox{0.7}{Y}}$, is uniform, and $H_{0}: X\perp Y$ holds. 
	$\sqrt{n}\hat{T}_{A}\stackrel{d}{\longrightarrow} N(0,\sigma^2)$ as $n\rightarrow \infty$, where $\sigma^2>0$ is a positive constant depending on the parameter $\lambda\in (0,1)\cup (1,\infty)$.
\end{prop}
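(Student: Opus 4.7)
I would treat $\hat T_{A}$ as a smooth function $\Phi$ of the empirical joint distribution $\hat{\mathbf{p}}$ and apply the multivariate delta method. Under $H_{0}$ the identities (\ref{insert01})--(\ref{insert03}) force $T_{A}=\Phi(\mathbf{p})=0$, so $\sqrt{n}\,\hat T_{A}=\sqrt{n}[\Phi(\hat{\mathbf{p}})-\Phi(\mathbf{p})]$. The multinomial CLT gives $\sqrt{n}(\hat{\mathbf{p}}-\mathbf{p})\cond N(\mathbf{0},\mathrm{diag}(\mathbf{p})-\mathbf{p}\mathbf{p}^{\!\top})$, and the delta method then yields $\sqrt{n}\,\hat T_{A}\cond N(0,\sigma^{2})$ with $\sigma^{2}=\Var_{\mathbf{p}}(g_{I,J})$, where $g_{i,j}=\partial\Phi/\partial p_{i,j}$ is evaluated at the null point $p_{i,j}=p_{i,\cdot}p_{\cdot,j}$. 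The proof then reduces to computing $g_{i,j}$ and showing that this variance is strictly positive.

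\textbf{Gradient computation.} Differentiating the three terms of $T_{A}$ separately and simplifying at the null point (where $c_{\lambda,\mathbf{p}}=S_{a}S_{b}$ with $S_{a}=\sum_{s}p_{s,\cdot}^{\lambda}$ and $S_{b}=\sum_{t}p_{\cdot,t}^{\lambda}$) yields, after straightforward bookkeeping,
\beq
g_{i,j}=\lambda^{2}\bigl[u_{i}(\ln p_{i,\cdot}-\mu_{q})(v_{j}-1)+v_{j}(\ln p_{\cdot,j}-\mu_{r})(u_{i}-1)\bigr],
\eeq
with $u_{i}=p_{i,\cdot}^{\lambda-1}/S_{a}$, $v_{j}=p_{\cdot,j}^{\lambda-1}/S_{b}$, $\mu_{q}=\sum_{i}q_{i}\ln p_{i,\cdot}$, and $\mu_{r}=\sum_{j}r_{j}\ln p_{\cdot,j}$. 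The identities $\sum_{i}p_{i,\cdot}(u_{i}-1)=0$ and $\sum_{i}p_{i,\cdot}u_{i}(\ln p_{i,\cdot}-\mu_{q})=0$ (and their $Y$-side analogues) force $\E_{\mathbf{p}}[g_{I,J}]=0$, so $\sigma^{2}=\E_{\mathbf{p}}[g_{I,J}^{2}]$. Squaring and using $I\perp J$ under $H_{0}$ factorizes $\sigma^{2}/\lambda^{4}$ as a sum $AB+CD+2EF$, where $A,C,E$ are pure $\mathbf{p}_{X}$-moments and $B,D,F$ are pure $\mathbf{p}_{Y}$-moments of the bracketed factors in the display above.

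\textbf{Strict positivity of $\sigma^{2}$.} By Cauchy--Schwarz, $E^{2}\le AD$ and $F^{2}\le BC$; by AM--GM, $AB+CD\ge 2\sqrt{ABCD}$; hence $\sigma^{2}/\lambda^{4}\ge(\sqrt{AB}-\sqrt{CD})^{2}\ge 0$. Equality throughout would force $u_{i}(\ln p_{i,\cdot}-\mu_{q})$ to be proportional to $u_{i}-1$ as functions of $i$ on the support of $\mathbf{p}_{X}$, which rearranges to the statement that $\ln x+\kappa\,x^{1-\lambda}$ is constant in $x$ on $\{p_{i,\cdot}\}$ for some $\kappa\neq 0$. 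Because $\lambda\neq 1$, its derivative $1/x+\kappa(1-\lambda)x^{-\lambda}$ has at most one positive zero, so the function takes each value at most twice; together with the analogous $Y$-side obstruction and the sign/magnitude saturation conditions $EF=-\sqrt{ABCD}$ and $\sqrt{AB}=\sqrt{CD}$, a short case analysis forces either $\mathbf{p}_{X}$ or $\mathbf{p}_{Y}$ to be uniform, contradicting the hypothesis. Therefore $\sigma^{2}>0$.

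\textbf{Main obstacle.} The delta method and the algebraic identities that kill $\E[g_{I,J}]$ are routine. The substantive difficulty is the non-degeneracy argument above: one must rule out the saturation case in the Cauchy--Schwarz/AM--GM chain, and it is here that both hypotheses enter together. The restriction $\lambda\neq 1$ prevents $u_{i}\equiv 1$ and $v_{j}\equiv 1$ (which would collapse $g_{i,j}$ to zero, precisely the degeneracy that produces Wilks' chi-squared limit in (\ref{core01})), while non-uniformity of each marginal keeps the logarithmic factors $\ln p_{i,\cdot}-\mu_{q}$ and $\ln p_{\cdot,j}-\mu_{r}$ from vanishing identically. Losing either assumption reduces $\sqrt{n}\,\hat T_{A}$ to a degenerate statistic whose quadratic term drives a chi-squared, rather than normal, limit.
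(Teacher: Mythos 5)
Your core argument coincides with the paper's: the appendix likewise computes the gradient $\nabla=\{\partial T_A/\partial p_{i,j}\}$, invokes the multinomial CLT $\sqrt{n}(\hat{\mathbf{v}}-\mathbf{v})\stackrel{d}{\rightarrow}MVN(\mathbf{0},\Sigma)$, and applies the first-order delta method to get $\sqrt{n}\hat{T}_A\stackrel{d}{\rightarrow}N(0,\sigma^2)$ with $\sigma^2=\nabla^{\tau}\Sigma\nabla$. Your simplified null-point gradient $g_{i,j}=\lambda^{2}\bigl[u_i(\ln p_{i,\cdot}-\mu_q)(v_j-1)+v_j(\ln p_{\cdot,j}-\mu_r)(u_i-1)\bigr]$ checks out (differentiating the three entropy terms and using $p_{i,j}=p_{i,\cdot}p_{\cdot,j}$, $c_{\lambda,\mathbf{p}}=S_aS_b$ reproduces it exactly), and it is a cleaner form than the unsimplified partials the paper records. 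Where you genuinely go beyond the paper is in arguing $\sigma^2>0$: the paper's proof stops at $\sigma^2=\nabla^{\tau}\Sigma\nabla$ and never establishes strict positivity, even though that is the substantive content of the proposition and the only place the hypotheses $\lambda\neq 1$ and non-uniform marginals are needed. So you are attempting to fill a real gap in the paper's own argument.

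However, your positivity argument does not close. Setting $a_i=u_i(\ln p_{i,\cdot}-\mu_q)$, $d_i=u_i-1$, $b_j=v_j-1$, $c_j=v_j(\ln p_{\cdot,j}-\mu_r)$, the correct Cauchy--Schwarz pairings are $E^2\le AC$ and $F^2\le BD$ (your $AD$ and $BC$ pair an $X$-moment with a $Y$-moment), but that is only a labeling slip. The substantive problem is the equality analysis: saturation of $E^2\le AC$ means $a_i=\kappa_X d_i$ on the support, and this does \emph{not} force $\mathbf{p}_X$ to be uniform. Your own observation that $\ln x+\kappa x^{1-\lambda}$ takes each value at most twice already admits a two-valued marginal, and in fact for any two-point marginal $\{x,1-x\}$ with $x\neq 1/2$ a direct computation gives $a_1/d_1=a_2/d_2=x^{\lambda-1}(1-x)^{\lambda-1}\ln\bigl(\tfrac{x}{1-x}\bigr)/\bigl[S_a\,(x^{\lambda-1}-(1-x)^{\lambda-1})\bigr]$, so the proportionality holds automatically for every $\lambda\neq1$. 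In that regime (e.g.\ a $2\times2$ table) full saturation occurs and $\sigma^2/\lambda^4=(\kappa_X+\kappa_Y)^2 BC$, so degeneracy is equivalent to $\kappa_X+\kappa_Y=0$; what must actually be proved is a sign statement --- that $\kappa_X$ and $\kappa_Y$ always share the sign of $\lambda-1$ for non-uniform marginals --- not a uniformity statement. Your sketch gestures at ``sign/magnitude saturation conditions'' but the ``short case analysis'' as described would conclude the wrong thing in this case. To complete the proof you need the separate monotonicity/sign argument for $\kappa_X=a_i/d_i$, which neither you nor the paper supplies.
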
 A proof of Proposition \ref{prop1} is given in Appendix. 

Let $N=\lim_{n\rightarrow\infty}\sqrt{n}\hat{T}_{A}$ denote the normal random variable under the conditions of Proposition \ref{prop1}. By (\ref{core01}) and Lemma \ref{prop1}, 
$-N=\lim_{n\rightarrow\infty}\sqrt{n}\hat{T}_{B}$, where $N$ is the same random variable as in $N=\lim_{n\rightarrow\infty}\sqrt{n}\hat{T}_{A}$. 

\begin{prop}\label{prop2} Under the conditions of Proposition \ref{prop1}, 
	\begin{enumerate}
		\item $Z_{A}=\sqrt{n}\hat{T}_{A}/\hat{\sigma} \stackrel{d}{\longrightarrow}N(0,1)$,
		\item $Z_{B}=\sqrt{n}\hat{T}_{B}/\hat{\sigma} \stackrel{d}{\longrightarrow}N(0,1)$, and
		\item $Z_{AB}=1_{[|Z_{A}|\geq |Z_{B}|]}Z_{A}+1_{[|Z_{A}|< |Z_{B}|]}Z_{B}  \stackrel{d}{\longrightarrow}N(0,1)$, 
	\end{enumerate} where $\hat{\sigma}^2$ is the variance given in (\ref{varianceTA}) with all $p_{i,j}$ replaced by $\hat{p}_{i,j}$ for $i\geq 1$ and $j\geq 1$.
\end{prop}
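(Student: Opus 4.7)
The plan is to reduce Parts 1 and 2 to Slutsky's theorem applied to Proposition~\ref{prop1} and (\ref{core01}), and to handle Part 3 through joint convergence of $(Z_A, Z_B)$.

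First I would verify that $\hat\sigma^2 \conp \sigma^2$. The variance $\sigma^2$ appearing in (\ref{varianceTA}) is a continuous function of the joint probabilities $\{p_{i,j}\}$, and $\hat\sigma^2$ is the same function evaluated at $\{\hat p_{i,j}\}$. Since $\hat p_{i,j}\to p_{i,j}$ almost surely by the law of large numbers, the continuous mapping theorem (together with a truncation argument to handle the possibly countable alphabet) yields $\hat\sigma^2\conp\sigma^2$, with positivity of $\sigma^2$ supplied by Proposition~\ref{prop1}.

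Part 1 is then immediate from Slutsky's theorem: Proposition~\ref{prop1} gives $\sqrt{n}\hat T_A\cond N(0,\sigma^2)$, and combining with $1/\hat\sigma\conp 1/\sigma$ yields $Z_A\cond N(0,1)$. For Part 2, equation (\ref{core01}) gives $\sqrt{n}\hat T_B = -\sqrt{n}\hat T_A + \sqrt{n}(\hat T_A + \hat T_B) = -\sqrt{n}\hat T_A + o_p(1)$, so $\sqrt{n}\hat T_B\cond -N\sim N(0,\sigma^2)$ by symmetry of the normal, and Slutsky again gives $Z_B\cond N(0,1)$.

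For Part 3, the same argument yields the joint convergence $(Z_A, Z_B)\cond (W, -W)$ with $W\sim N(0,1)$. Defining $f(a,b) = 1_{[|a|\geq |b|]}a + 1_{[|a|<|b|]}b$, we have $Z_{AB} = f(Z_A, Z_B)$; the $\geq$ tie-breaking forces $f(w,-w)=w$, so that formally $f(W,-W) = W\sim N(0,1)$. The principal obstacle is that $f$ is discontinuous precisely on the diagonal $\{|a|=|b|\}$, which carries full mass under the limit $(W,-W)$, so the classical continuous mapping theorem cannot be invoked directly. To circumvent this, I would exploit the rate $V_n := Z_A + Z_B = O_p(n^{-1/2})$ furnished by $2n(\hat T_A+\hat T_B)\cond \chi^2_{(I-1)(J-1)}$. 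Substituting $Z_B = V_n - Z_A$, the event $\{|Z_A|\geq |Z_B|\}$ reduces to $\{V_n(2Z_A-V_n)\geq 0\}$, so $Z_{AB}$ can be rewritten purely in terms of $(Z_A, V_n)$. A careful analysis of the joint asymptotics of $(Z_A, V_n)$, with $Z_A$ of order $O_p(1)$ and $V_n\conp 0$, is then used to show that $Z_{AB}-Z_A = o_p(1)$, after which Slutsky yields $Z_{AB}\cond N(0,1)$.
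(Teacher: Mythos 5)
Your treatment of Parts 1 and 2 is sound and matches what the paper (which states Proposition \ref{prop2} without a written proof) implicitly intends: Proposition \ref{prop1}, consistency of $\hat\sigma^2$, and Slutsky give Part 1, and the decomposition $\sqrt{n}\hat T_B=-\sqrt{n}\hat T_A+\sqrt{n}(\hat T_A+\hat T_B)$ together with (\ref{core01}) gives Part 2. You also deserve credit for flagging the point the paper passes over in silence: the map $f(a,b)=1_{[|a|\geq|b|]}a+1_{[|a|<|b|]}b$ is discontinuous precisely on $\{|a|=|b|\}$, which carries full mass under the limit $(W,-W)$, so Part 3 genuinely cannot be obtained from the continuous mapping theorem.

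However, your proposed repair of Part 3 fails. The decisive fact is that $V_n=Z_A+Z_B=\sqrt{n}\,\widehat{\MI}(X^*,Y^*)/\hat\sigma$ is \emph{nonnegative} with probability one --- $\hat T_A+\hat T_B$ is the mutual information of a bona fide distribution, namely the power escort of the empirical distribution --- and is strictly positive with probability tending to one, since $2n(\hat T_A+\hat T_B)$ converges to a continuous (chi-squared) law. Your own reduction of the selection event to $\{V_n(2Z_A-V_n)\geq 0\}$ then shows that, on $\{V_n>0\}$, one has $Z_{AB}=Z_A$ exactly when $Z_A\geq V_n/2$, and $Z_{AB}=Z_B=V_n-Z_A$ otherwise; in the latter case $Z_{AB}-Z_A=V_n-2Z_A$, which on the event $\{Z_A<V_n/2\}$ (of limiting probability $1/2$) behaves like $-2Z_A$ and is certainly not $o_{p}(1)$. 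So the step ``$Z_{AB}-Z_A=o_{p}(1)$'' cannot be carried out. Worse, the correct bookkeeping along your own lines gives $Z_{AB}=\max(Z_A,Z_B)$ on $\{V_n\geq 0\}$, and $|\max(Z_A,V_n-Z_A)-|Z_A||\leq V_n$, so $Z_{AB}\cond |W|$, a half-normal rather than a standard normal limit. This indicates that Part 3 as stated needs modification: the two-sided test retains asymptotic level $\alpha$, because $|Z_{AB}|=\max(|Z_A|,|Z_B|)$ has the same limit $|W|$ in either case, but the claimed $N(0,1)$ limit for $Z_{AB}$ itself does not appear to hold, and no proof strategy that concludes $Z_{AB}\cond N(0,1)$ can succeed without addressing the sign constraint imposed by $V_n\geq 0$.
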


At least three tests for $H_{0}: X\perp Y$  are feasible according to Proposition \ref{prop2}.
\begin{enumerate}
	\item Test 1: $H_{0}$ is rejected if $Z_{A}<-z_{\alpha/2}$ or $Z_{A}>z_{\alpha/2}$;
	\item Test 2: $H_{0}$ is rejected if $Z_{B}<-z_{\alpha/2}$ or $Z_{B}>z_{\alpha/2}$; and
	\item Test 3: $H_{0}$ is rejected if $Z_{AB}<-z_{\alpha/2}$ or $Z_{AB}>z_{\alpha/2}$
\end{enumerate} where $\alpha\in (0,1)$ is a prefixed constant, $z_{\alpha/2}$ is the $100\times (1-\alpha/2)$ th percentile of the standard normal distribution. The test based on $Z_{AB}$ is the proposed test, and it is a consistent test as described in Proposition \ref{prop3} below.

\begin{prop}\label{prop3} Suppose neither of the two underlying marginal distributions, $\mathbf{p}_{\scalebox{0.7}{X}}$ and 
	$\mathbf{p}_{\scalebox{0.7}{Y}}$, is uniform.  Then  
	\[
	\lim_{n\rightarrow \infty}\Prob(Z_{AB}\in (-\infty,-z_{\alpha/2})\cup (z_{\alpha/2},\infty)|H_{a}) =1.
	\]
\end{prop}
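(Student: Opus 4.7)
The plan is to prove consistency by showing that $|Z_{AB}| \conp +\infty$ under $H_a$; this immediately yields $\Prob(|Z_{AB}| > z_{\alpha/2} \mid H_a) \to 1$. By construction $|Z_{AB}| = \max(|Z_A|, |Z_B|)$, and the elementary inequality $|Z_A + Z_B| \le |Z_A| + |Z_B| \le 2\max(|Z_A|, |Z_B|)$ yields the lower bound
\[
|Z_{AB}| \;\geq\; \tfrac{1}{2}|Z_A + Z_B| \;=\; \frac{\sqrt{n}\,|\hat{T}_A + \hat{T}_B|}{2\hat{\sigma}}.
\]
It therefore suffices to show that $\sqrt{n}\,|\hat T_A + \hat T_B|/\hat\sigma \conp +\infty$.

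The first substantive step is to identify the deterministic limit of $\hat T_A + \hat T_B$. Inspecting the two curly brackets in (\ref{IndepDef2}), the single-marginal expressions $\sum_i (p^\lambda_{i,\cdot}/\sum_s p^\lambda_{s,\cdot})\ln(\cdot)$ and $\sum_j (p^\lambda_{\cdot,j}/\sum_t p^\lambda_{\cdot,t})\ln(\cdot)$ occur in $T_A$ with a minus sign and in $T_B$ with a plus sign, so they cancel upon addition; what remains is precisely the left-hand side of (\ref{IndepDef1}), namely $\MI(X^*,Y^*)$. Part 2 of Lemma \ref{lemma1} translates $H_a: X \not\perp Y$ into $X^* \not\perp Y^*$, so $T_A + T_B = \MI(X^*,Y^*) > 0$. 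The second step is plug-in consistency: for a finite contingency table the weak law of large numbers gives $\hat p_{i,j} \conp p_{i,j}$ in every cell, and both $\mathbf{p}_{\scalebox{0.7}{X,Y}} \mapsto T_A + T_B$ and $\mathbf{p}_{\scalebox{0.7}{X,Y}} \mapsto \sigma^2$ from (\ref{varianceTA}) are built from continuous ingredients of the form $p^\lambda$ and $p^\lambda \ln p$ (with the convention $0\ln 0 = 0$) together with denominators such as $c_{\lambda,\mathbf{p}} > 0$ that are bounded below away from zero. The continuous mapping theorem therefore yields $\hat T_A + \hat T_B \conp \MI(X^*,Y^*) > 0$ and $\hat\sigma \conp \sigma_a$ for some finite $\sigma_a \geq 0$, so $\hat\sigma = O_p(1)$. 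Combining, $\sqrt{n}\,|\hat T_A + \hat T_B|/\hat\sigma \conp +\infty$, whence $|Z_{AB}| \conp +\infty$ and the consistency conclusion follows.

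The main obstacle is the continuity and finiteness of the variance estimator $\hat\sigma^2$ at the alternative. Provided (\ref{varianceTA}) is a continuous function of $\mathbf{p}_{\scalebox{0.7}{X,Y}}$ on the closed simplex, which is natural for a power-escort construction with $\lambda \in (0,1) \cup (1,\infty)$, the continuous mapping step is routine; if instead the expression were to involve reciprocals of cell probabilities that can vanish at the alternative, one would need either to restrict to alternatives with full support or to supply a separate boundary/moment bound controlling $\hat\sigma$. A secondary but innocuous case is $\sigma_a = 0$, which only accelerates the divergence of the lower bound and therefore poses no threat to the consistency conclusion.
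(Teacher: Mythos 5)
Your proof is correct and follows essentially the same route as the paper: identify $T_A+T_B=\MI(X^*,Y^*)>0$ under $H_a$ via Lemma \ref{lemma1}, invoke plug-in consistency of $\hat T_A+\hat T_B$ and $\hat\sigma$, and conclude that $Z_A+Z_B\conp\infty$ forces $Z_{AB}$ to diverge. Your explicit inequality $|Z_{AB}|=\max(|Z_A|,|Z_B|)\geq\tfrac12|Z_A+Z_B|$ is a cleaner formalization of the paper's verbal step that ``at least one of $Z_A$ and $Z_B$ is carried above all bounds,'' but it is the same argument.
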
 A proof of Proposition \ref{prop3} is given in Appendix.

\section{Simulations}
The performance of the proposed test is assessed by simulations. Numerous simulation studies are carried out for cases with various forms of underlying distributions. In each case, the proposed test is compared against Pearson's chi-squared test, with degrees of freedom $(I-1)(J-1)$ and $(\hat{I}-1)(\hat{J}-1)$ respectively, with  
six levels of sample size, $n=30, 100, 500, 1000, 1500$, and $2000$. The results summarized in Table \ref{table} are representative of the general trends observed and therefore are presented below.

The sequence of five pairs of $H_{0}$ and $H_{a}$ is specifically constructed as follows. The example of the $11\times 11$ 
contingency  table in Section 1 is one of such cases. 
In that example, the contingency table has $I=11=1+10$ rows and $J=11=1+10$ columns. 
A more general distribution may be described as follows. For a given value $p\in (0,1)$, let both of the row and the column marginal  be  $\{1-p, p/(I-1), p/(I-1),\cdots, p/(I-1) \}$.
A pair of $H_{0}$ and $H_{a}$ may be constructed as follows. 

Under $H_{0}$, a joint distribution is constructed as follows.
\beq \label{SparseDistributionH0}
\mathbf{p}_{0}= \left(\begin{array}{cccc}
	(1-p)^2 & p(1-p)/(I-1) & \cdots & p(1-p)/(I-1) \\
	p(1-p)/(I-1) & p^2/(I-1)^2 &  \cdots & p^2/(I-1)^2  \\
	p(1-p)/(I-1) & p^2/(I-1)^2 & \cdots & p^2/(I-1)^2 \\
	\vdots & \vdots & \vdots\hspace{2pt}\vdots\hspace{2pt}\vdots  &\vdots \\
	p(1-p)/(I-1) & p^2/(I-1)^2 & \cdots & p^2/(I-1)^2 \\
\end{array}\right).
\eeq
The joint distribution of (\ref{SparseDistributionH0}) is reconstructed, first by summing all entries in the lower-right $(I-1)(J-1)$ sub-matrix and then redistributing the sum on the diagonal of the sub-matrix uniformly, resulting in

\[    \mathbf{p}_{a}= \left(\begin{array}{cccc}
	(1-p)^2 & p(1-p)/(I-1) & \cdots & p(1-p)/(I-1) \\
	p(1-p)/(I-1) & p^2/(I-1) &  \cdots & 0    \\
	p(1-p)/(I-1) &  0 &   \ddots  &   0 \\
	\vdots &   \vdots &   \vdots\hspace{2pt}\vdots\hspace{2pt}\vdots  &  0 \\
	p(1-p)/(I-1) & 0  &  \cdots & p^2/(I-1) \\
\end{array}\right).
\] Thus, $H_{0}: \mathbf{p}_{0}$ versus $H_{a}:  \mathbf{p}_{a}$ becomes a pair. Letting the parameter $p$ take on values, 0.5, 0.6, 0.7, 0.8, and 0.9, respectively, the dependence structure in $H_{a}$ becomes weaker and weaker. 

The results of the simulation studies are reported in Table \ref{table}, each based on one hundred thousand replicates of iid sample of indicated size $n$, and $\lambda=2$. 

Referring to Table \ref{table},  the distribution with $1-p=0.5$ on the top corresponds to the strongest contrast between $H_{0}$ and $H_{a}$ among all five cases in the table. For $n=30$ and $n=100$, none of the three test statistics converges satisfactorily under $H_{0}$ at $\alpha=0.01$. For $n\geq 500$, both the proposed test and the Pearson's chi-squared test with observed degrees of freedom converge satisfactorily under $H_{0}$, and provide very good power at $H_{a}$. However, the Pearson's chi-squared test with theoretical degrees of freedom does not converge under $H_{0}$. 

The distribution with $1-p=0.6$ provides a less strong contrast between $H_{0}$ and $H_{a}$. For $n\geq 500$, both the proposed test and the Pearson's chi-squared test with observed degrees of freedom converge satisfactorily under $H_{0}$, and provide very good power at $H_{a}$. However, the Pearson's chi-squared test with theoretical degrees of freedom has a very inflated Type I error probability. One may also notice that the Pearson's chi-squared test with observed degrees of freedom perhaps converges a little slower than the proposed test under $H_{0}$.

The distribution with $1-p=0.7$ provides an even less strong contrast between $H_{0}$ and $H_{a}$. For $n\geq 500$, only the proposed test converges satisfactorily under $H_{0}$, and provide meaningful power at $H_{a}$. However, both of the Pearson's chi-squared tests have inflated Type I error probability. 

The distribution with $1-p=0.8$ provides a weak contrast between $H_{0}$ and $H_{a}$. For $n\geq 500$, the proposed test converges satisfactorily under $H_{0}$, and provide little power at $H_{a}$. On the other hand, both of the Pearson's chi-squared tests have very inflated Type I error probability. One may also notice that the Pearson's chi-squared test with observed degrees of freedom perhaps converges a little slower than the proposed test under $H_{0}$.

The distribution with $1-p=0.9$ provides a very weak contrast between $H_{0}$ and $H_{a}$. So much so that even for $n\geq 500$, the proposed test converges very slowly under $H_{0}$, and provides essentially no power at $H_{a}$. On the other hand, both of the Pearson's chi-squared tests have a total breakdown under $H_{0}$. 

The following three major points are observed in Table \ref{table}, as well as in other simulation studies investigated but not presented herewithin.
\begin{enumerate}
	\item In small or dense contingency tables, both Pearson's chi-squared tests converge quickly under $H_{0}$ and are generally more powerful than the proposed test.
	\item In larger and sparse contingency tables,  both Pearson's chi-squared tests converge much slower under $H_{0}$ and tend to have a much higher probability of Type I error than what is intended. 
\end{enumerate}
All things considered, in practice, if the Rule is satisfied, then Pearson's chi-squared test is recommended, otherwise the proposed test of this article is recommended, provided that $n IJ\geq 5$ or more practically $n \hat{I} \hat{J}\geq 5$. In that regard, the proposed test is not meant to replace Pearson's test in all circumstances, but only when Pearson's test  is judged not appropriate by the current rule-of-thumb criteria. 

\section{Remarks}
The idea of the article may be explained simply by the convergence rate of $\widehat{\MI}(X^*,Y^*)$ under $H_{0}$.
$\widehat{\MI}(X^*,Y^*)$ is $n$-convergent, that is, 
\begin{align*}
	n \widehat{\MI}(X^*,Y^*) &=n[(H(X^*)+H(Y^*))+(-H(X^*,Y^*))]\rightarrow \chi^2_{(I-1)(J-1)}/2, \mbox{ and} \\
	\sqrt{n}\widehat{\MI}(X^*,Y^*)&=\sqrt{n}[(H(X^*)+H(Y^*))+(-H(X^*,Y^*))]\rightarrow 0.
\end{align*} In other words, under $H_{0}$, the difference between the two additive parts of 
$\widehat{\MI}(X^*,Y^*) $ approaches zero very fast. 
However by inserting a zero in the form of four terms, as in (\ref{IndepDef2}),  $\widehat{\MI}(X^*,Y^*) $ is wedged into two terms, $\hat{T}_{A}$ and $\hat{T}_{B}$, both of which approach zero under $H_{0}$, but are at a much slower rate, that is, $\sqrt{n}$-convergent. This insertion is almost literally a keystone, splitting $\sqrt{n} \widehat{\MI}(X^*,Y^*)$ into two random variables and eking out asymptotic normality of $Z_{A}$, of $Z_{B}$, and hence of $Z_{AB}$ under $H_{0}$.
The immediate advantages of the normality of $Z_{AB}$ are that the knowledge of $I$ and $J$ is not required when testing $H_{0}$, and that the test statistic seems to converge faster than Pearson's chi-squared test under $H_{0}$. On the other hand, the statistical assessment of $H_{0}$ is done by two separate random pieces instead of one, as in $n\widehat{\MI}$, and some efficiency may be lost as evidenced by the simulation studies. The observed loss of power may be considered a cost for more generality, that is,  the knowledge of $I$ and $J$ is not required.

It is to be noted that the concept of escort distributions is essential in the arguments leading to the proposed test. Only when $\lambda\neq 1$, the inserts, the left-hand-sides of (\ref{insert02}) and (\ref{insert03}), would enable an positive variance in (\ref{varianceTA}), and hence the asymptotic normality of $Z_{AB}$. 

It is also to be highlighted that the proposed test based on $Z_{AB}$ is a consistent test as stated in Proposition \ref{prop3}. This fact lends the utility of the proposed test in the general alternative space. That is to say, provided a sufficiently large sample, any form of dependent structure between $X$ and $Y$ will be detected. The test is proposed herewithin in the form of a two-sided test due to its generality. For specific forms of dependence structures between $X$ and $Y$, some of the tests based on $Z_{A}$, $Z_{B}$ or $Z_{AB}$, one-sided or two-sided, may have better performance in terms of faster convergence under $H_{0}$ and higher power under $H_{a}$ than others.  This provides a potentially fruitful direction for further investigation.

\section{Appendix}
\begin{proof}[Proof of Proposition \ref{prop1}]
	It may be verified that for each pair $(i,j)$, $i = 1, \cdots, I-1$, and $j = 1, \cdots, J-1$,
	\begin{align*}
		\frac{\partial T_A}{\partial p_{i, j}} = & \quad (\ln p^*_{I,.}+1)\frac{\lambda p^*_{I,.}}{p_{I,.}} - (\ln p^*_{i,.} +1 )\frac{\lambda p^*_{i,.}}{p_{i,.}} + \lambda \left[\frac{p^*_{I,.}}{p_{I,.}} p^{\lambda -1}_{.,J} - \frac{p^*_{i,.}}{p_{i,.}}\right](H(X^*) - 1) \\
		& + (\ln p^*_{.,J}+1)\frac{\lambda p^*_{.,J}}{p_{.,J}} - (\ln p^*_{.,j} +1 )\frac{\lambda p^*_{.,j}}{p_{.,j}} + \lambda \left[\frac{p^*_{.,J}}{p_{.,J}} p^{\lambda -1}_{I,.} - \frac{p^*_{.,j}}{p_{.,j}}\right](H(Y^*) - 1) \\
		& + (\ln p^*_{i,j}) \frac{\lambda p^*_{i,j}}{p_{i,j}} + \lambda \left(\frac{p^*_{i,j}}{p_{i,j}} - \frac{p^*_{I, J}}{p_{I, J}}\right)H(X^*, Y^*) - \lambda (\ln p^*_{I,J}) \frac{p^*_{I,J}}{p_{I,J}} 
	\end{align*} where $p^*_{i,j}$ is defined in (\ref{escortPower}), $p^*_{\cdot,j}=\sum_{i}p^*_{i,j}$, and $p^*_{\cdot,j}=\sum_{j}p^*_{i,j}$; 
	
	for $i = I$, and $j = 1, \dots, J-1$,
	\begin{align*}
		\frac{\partial T_A}{\partial p_{i=I, j}} = & \quad  \lambda \frac{p^*_{I,.}}{p_{I,.}} \left( p^{\lambda -1}_{.,J} - p^{\lambda -1}_{.,j}\right)(H(X^*) - 1) \\
		& + (\ln p^*_{.,J}+1)\frac{\lambda p^*_{.,J}}{p_{.,J}} - (\ln p^*_{.,j} +1 )\frac{\lambda p^*_{.,j}}{p_{.,j}} 
		+ \lambda \left( \frac{p^*_{.,J}}{p_{.,J}} p^{\lambda -1}_{I,.} - \frac{p^*_{.,j}}{p_{.,j}} \right) (H(Y^*) - 1) \\
		& + (\ln p^*_{I,j}) \frac{\lambda p^*_{I,j}}{p_{I,j}} + \lambda \left(\frac{p^*_{I,j}}{p_{I,j}} - \frac{p^*_{I, J}}{p_{I, J}}\right)H(X^*, Y^*) - \lambda (\ln p^*_{I,J}) \frac{p^*_{I,J}}{p_{I,J}}
	\end{align*}
	
	and for $i = 1, \dots, I-1$, and $j = J$,
	\begin{align*}
		\frac{\partial T_A}{\partial p_{i, J}} = & \quad (\ln p^*_{I,.}+1)\frac{\lambda p^*_{I,.}}{p_{I,.}} - (\ln p^*_{i,.} +1 )\frac{\lambda p^*_{i,.}}{p_{i,.}} + \lambda \left( \frac{p^*_{I,.}}{p_{I,.}} p^{\lambda -1}_{.,J} - \frac{p^*_{i,.}}{p_{i,.}} \right)(H(X^*) - 1) \\
		& + \lambda \frac{p^*_{.,J}}{p_{.,J}} \left( p^{\lambda -1}_{I,.} - p^{\lambda -1}_{i,.}\right)(H(Y^*) - 1) \\
		& +( \ln p^*_{i,J}) \frac{\lambda p^*_{i,J}}{p_{i,J}} + \lambda \left(\frac{p^*_{i,J}}{p_{i,J}} - \frac{p^*_{I, J}}{p_{I, J}}\right)H(X^*, Y^*) - \lambda (\ln p^*_{I,J})  \frac{p^*_{I,J}}{p_{I,J}} 
	\end{align*}
	
	Let
	\begin{align*}
		&\mathbf{v}=(v_{1},\cdots,v_{IJ-1})^{\tau}=\left(p_{1, 1}, p_{1, 2}. \cdots, p_{1, J}, p_{2, 1}, p_{2,2},\cdots, p_{2, J}, \cdots, p_{I,1}, p_{I,2}, \cdots, p_{I, J-1}\right)^{\tau}, \\
		&\hat{\mathbf{v}} =(\hat{v}_{1},\cdots,\hat{v}_{IJ-1})^{\tau}   =\left(\hat{p}_{1, 1}, \hat{p}_{1, 2}. \cdots, \hat{p}_{1, J}, \hat{p}_{2, 1}, \hat{p}_{2,2},\cdots, \hat{p}_{2, J}, \cdots, \hat{p}_{I,1}, \hat{p}_{I,2}, \cdots, \hat{p}_{I, J-1}\right)^{\tau},
	\end{align*}
	noting specifically the implied enumeration of the indexes of $\mathbf{v}$ corresponds to the arrangement of $(i,j)$ as in $p_{i,j}$, that is,
	\beq \label{indexes}
	\{1,2,\cdots,IJ-1\}=\{(1,1),\cdots, (1,J), (2,1), \cdots, (2,J), \cdots, (I-1,J), (I,1),\cdots,(I,J-1)\}.
	\eeq
	Let the gradient of $T_{A}$ with respect to $p_{i,j}$ for all $(i,j)$ be denoted by 
	$
	\nabla= \{ \partial T_{A} / \partial p_{i,j} \}
	$ with the index arrangement given in (\ref{indexes}). 
	
	It follows that $\sqrt{n}(\hat{\mathbf{v}}-\mathbf{v}) \stackrel{d}{\rightarrow} M V N(\mathbf{0}, \Sigma(\mathbf{v}))$, where $\Sigma(\mathbf{v})$ is the $(IJ-1) \times(IJ-1)$ covariance matrix given by
	\begin{align*}
		\Sigma=\Sigma(\mathbf{v})=\left(\begin{array}{cccc}
			v_{1}\left(1-v_{1}\right) & -v_{1} v_{2} & \cdots & -v_{1} v_{IJ-1} \\
			-v_{2} v_{1} & v_{2}\left(1-v_{2}\right) & \cdots & -v_{2} v_{IJ-1} \\
			\vdots & \vdots & \vdots\hspace{2pt}\vdots\hspace{2pt}\vdots & \vdots \\
			-v_{IJ-1} v_{1} & -v_{IJ-1} v_{2} & \cdots & v_{IJ-1}\left(1-v_{IJ-1}\right)
		\end{array}\right) 
	\end{align*}
	According to the first-order delta method, $\sqrt{n}\hat{T}_{A}\stackrel{d}{\longrightarrow} N(0,\sigma^2)$, as $n\rightarrow \infty$, where
	\beq \label{varianceTA}
	\sigma^2 = \nabla^{\tau} \Sigma \nabla.
	\eeq
\end{proof}

\begin{proof}[Proof of Proposition \ref{prop3}]
	Under $H_{a}: X\not\perp Y$, by Lemma \ref{lemma1}, $T_{A}+T_{B}=\mu_{\mathbf{p}}>0$ where $T_{A}$ and $T_{B}$ are as in (\ref{IndepDef3}). By the respective consistencies of plug-in estimators of $T_{A}$, $T_{B}$ and $\sigma^2$, 
	$\hat{T}_{A}+\hat{T}_{B}\stackrel{p}{\rightarrow}\mu_{\mathbf{p}}>0$, and hence
	$\sqrt{n}\hat{T}_{A}/\hat{\sigma}+\sqrt{n}\hat{T}_{A}/\hat{\sigma} =Z_{A}+Z_{B}\stackrel{p}{\rightarrow}\infty$, which in turn implies that at least one of $Z_{A}$ and $Z_{B}$ is carried above all bounds in probability, which finally implies that $Z_{AB}\stackrel{p}{\rightarrow}\infty$, as $n\rightarrow \infty$.
\end{proof}

\begin{table}[ht]
	\begin{center}
	\resizebox{.75\textwidth}{!}{%
	\begin{tabular}{|c|l|cc|cc|cc|}
		\hline
		Distribution                & Sample Size & \multicolumn{2}{c|}{$Z_{AB}: H_0, H_a$} & \multicolumn{2}{c|}{$\chi^2_{(\hat{I}-1)(\hat{J}-1)}: H_0, H_a$} & \multicolumn{2}{c|}{$\chi^2_{(I-1)(J-1)}: H_0, H_a$} \\ \hline
		\multirow{6}{*}{$1-p=0.50$} & $n$ = 30    & \multicolumn{1}{c|}{0.2002}  & 0.3235 & \multicolumn{1}{c|}{0.0632}              & 0.4377               & \multicolumn{1}{c|}{0.0831}        & 0.0694           \\ \cline{2-8} 
		& $n$ = 100   & \multicolumn{1}{c|}{0.0561}  & 0.7739 & \multicolumn{1}{c|}{0.0373}              & 0.9951              & \multicolumn{1}{c|}{0.0439}        & 0.9940        \\ \cline{2-8} 
		& $n$ = 500   & \multicolumn{1}{c|}{0.0145}  & 1.0000       & \multicolumn{1}{c|}{0.0124}              & 1.0000                    & \multicolumn{1}{c|}{0.2058}        & 1.0000          \\ \cline{2-8} 
		& $n$ = 1000  & \multicolumn{1}{c|}{0.0122}  & 1.0000       & \multicolumn{1}{c|}{0.0113}              & 1.0000                    & \multicolumn{1}{c|}{0.1437}        & 1.0000              \\ \cline{2-8} 
		& $n$ = 1500  & \multicolumn{1}{c|}{0.0122}  & 1.0000       & \multicolumn{1}{c|}{0.0109}              & 1.0000                    & \multicolumn{1}{c|}{0.10841}        & 1.0000              \\ \cline{2-8} 
		& $n$ = 2000  & \multicolumn{1}{c|}{0.0113}  & 1.0000       & \multicolumn{1}{c|}{0.0098}              & 1.0000                    & \multicolumn{1}{c|}{0.0858}        & 1.0000              \\ \hline
		\multirow{6}{*}{$1-p=0.60$} & $n$ = 30    & \multicolumn{1}{c|}{0.0540}     & 0.0862 & \multicolumn{1}{c|}{0.0968}              & 0.2811              & \multicolumn{1}{c|}{0.0021}        & 0.0133              \\ \cline{2-8} 
		& $n$ = 100   & \multicolumn{1}{c|}{0.0071}  & 0.0965 & \multicolumn{1}{c|}{0.0764}              & 0.8760              & \multicolumn{1}{c|}{0.0997}        & 0.8515       \\ \cline{2-8} 
		& $n$ = 500   & \multicolumn{1}{c|}{0.0087}   & 0.9267 & \multicolumn{1}{c|}{0.0183}              & 1.0000                    & \multicolumn{1}{c|}{0.0767}        & 1.0000              \\ \cline{2-8} 
		& $n$ = 1000  & \multicolumn{1}{c|}{0.0105}  & 0.9997  & \multicolumn{1}{c|}{0.0143}              & 1.0000                    & \multicolumn{1}{c|}{0.0412}        & 1.0000              \\ \cline{2-8} 
		& $n$ = 1500  & \multicolumn{1}{c|}{0.0103}  & 1.0000       & \multicolumn{1}{c|}{0.0123}              & 1.0000                    & \multicolumn{1}{c|}{0.0316}        & 1.0000              \\ \cline{2-8} 
		& $n$ = 2000  & \multicolumn{1}{c|}{0.0102}  & 1.0000       & \multicolumn{1}{c|}{0.0120}              & 1.0000                   & \multicolumn{1}{c|}{0.0263}        & 1.0000              \\ \hline
		\multirow{6}{*}{$1-p=0.70$} & $n$ = 30    & \multicolumn{1}{c|}{0.0040}  & 0.0054 & \multicolumn{1}{c|}{0.1352}              & 0.2118              & \multicolumn{1}{c|}{0.0002}         & 0.0020          \\ \cline{2-8} 
		& $n$ = 100   & \multicolumn{1}{c|}{0.0003}   & 0.0011 & \multicolumn{1}{c|}{0.1400}              & 0.5563              & \multicolumn{1}{c|}{0.0966}        & 0.4720        \\ \cline{2-8} 
		& $n$ = 500   & \multicolumn{1}{c|}{0.0081}  & 0.1324  & \multicolumn{1}{c|}{0.0320}              & 1.0000                    & \multicolumn{1}{c|}{0.0320}        & 1.0000              \\ \cline{2-8} 
		& $n$ = 1000  & \multicolumn{1}{c|}{0.0096}  & 0.4032  & \multicolumn{1}{c|}{0.0200}              & 1.0000                    & \multicolumn{1}{c|}{0.0200}        & 1.0000              \\ \cline{2-8} 
		& $n$ = 1500  & \multicolumn{1}{c|}{0.0102}   & 0.6541 & \multicolumn{1}{c|}{0.0168}              & 1.0000                    & \multicolumn{1}{c|}{0.0168}        & 1.0000              \\ \cline{2-8} 
		& $n$ = 2000  & \multicolumn{1}{c|}{0.0104}  & 0.8196 & \multicolumn{1}{c|}{0.0151}              & 1.0000                    & \multicolumn{1}{c|}{0.0151}        & 1.0000             \\ \hline
		\multirow{6}{*}{$1-p=0.80$} & $n$ = 30    & \multicolumn{1}{c|}{0.0021}  & 0.0027 & \multicolumn{1}{c|}{0.1697}              & 0.1941              & \multicolumn{1}{c|}{0.00213}        & 0.0027        \\ \cline{2-8} 
		& $n$ = 100   & \multicolumn{1}{c|}{0.0000}        & 0.0000       & \multicolumn{1}{c|}{0.2082}              & 0.3369              & \multicolumn{1}{c|}{0.0997}        & 0.1977        \\ \cline{2-8} 
		& $n$ = 500   & \multicolumn{1}{c|}{0.0086}  & 0.0189 & \multicolumn{1}{c|}{0.0768}              & 0.9426              & \multicolumn{1}{c|}{0.0767}        & 0.9426        \\ \cline{2-8} 
		& $n$ = 1000  & \multicolumn{1}{c|}{0.0104}  & 0.0389 & \multicolumn{1}{c|}{0.0412}              & 0.9999              & \multicolumn{1}{c|}{0.0412}        & 0.9999        \\ \cline{2-8} 
		& $n$ = 1500  & \multicolumn{1}{c|}{0.0108}  & 0.0586  & \multicolumn{1}{c|}{0.0316}              & 1.0000                    & \multicolumn{1}{c|}{0.0316}        & 1.0000              \\ \cline{2-8} 
		& $n$ = 2000  & \multicolumn{1}{c|}{0.0109}  & 0.0813 & \multicolumn{1}{c|}{0.0263}              & 1.0000                    & \multicolumn{1}{c|}{0.0263}        & 1.0000              \\ \hline
		\multirow{6}{*}{$1-p=0.90$} & $n$ = 30    & \multicolumn{1}{c|}{0.0831}  & 0.0822 & \multicolumn{1}{c|}{0.2326}              & 0.2337              & \multicolumn{1}{c|}{0.0832}        & 0.0822        \\ \cline{2-8} 
		& $n$ = 100   & \multicolumn{1}{c|}{0.0000}  & 0.0000 & \multicolumn{1}{c|}{0.2386}              & 0.2519              & \multicolumn{1}{c|}{0.0439}        & 0.0532        \\ \cline{2-8} 
		& $n$ = 500   & \multicolumn{1}{c|}{0.0213}  & 0.0222 & \multicolumn{1}{c|}{0.2120}               & 0.4037               & \multicolumn{1}{c|}{0.2058}        & 0.3966        \\ \cline{2-8} 
		& $n$ = 1000  & \multicolumn{1}{c|}{0.0202}  & 0.0227 & \multicolumn{1}{c|}{0.1437}              & 0.6408              & \multicolumn{1}{c|}{0.1437}        & 0.6407        \\ \cline{2-8} 
		& $n$ = 1500  & \multicolumn{1}{c|}{0.0162}  & 0.0212 & \multicolumn{1}{c|}{0.1084}              & 0.8330              & \multicolumn{1}{c|}{0.1084}        & 0.8330        \\ \cline{2-8} 
		& $n$ = 2000  & \multicolumn{1}{c|}{0.0158}  & 0.0206 & \multicolumn{1}{c|}{0.0854}              & 0.9385              & \multicolumn{1}{c|}{0.0854}        & 0.9385        \\ \hline
	\end{tabular}%
}
	\end{center}

	\caption{Simulated Convergence and Power Comparison, $I=J=11$, $\alpha=0.01$ }
	\label{table}
\end{table}

\bibliographystyle{agsm}
\bibliography{Bibliography-MM-MC}
\end{document}